\numberwithin{equation}{section}
\newtheorem{theorem}{Theorem}[section]
\newtheorem{corollary}[theorem]{Corollary}
\newtheorem{lemma}[theorem]{Lemma}
\newtheorem{prop}[theorem]{Proposition}
\newtheorem{remark}[theorem]{Remark}
\theoremstyle{definition}
\newtheorem{definition}{Definition}[section]
\begin{document}
	\title[PICARD GROUP OF MODULI OF PARABOLIC HIGGS]{Picard Group of Moduli of Parabolic Higgs bundles}
	
\author{Sumit Roy}
\address{Center for Geometry and Physics, Institute for Basic Science (IBS), Pohang 37673, Korea}
\email{sumit@ibs.re.kr}
\thanks{E-mail : sumit@ibs.re.kr}
\thanks{Address: Center for Geometry and Physics, Institute for Basic Science (IBS), Pohang 37673, Korea}
\subjclass[2010]{14C22, 14D22,14H60}
\keywords{Picard group; Moduli space; Parabolic bundle; Higgs bundle}
	
\begin{abstract}
	Let $X$ be a compact Riemann surface of genus $g\geq 3$ and let $\mathcal{M}_{\mathrm{par}}$ be the moduli space of semistable parabolic bundles over $X$. Let $\mathcal{M}_{\mathrm{parH}}$	denote the moduli space of semistable parabolic Higgs bundles over $X$. In this article, we study the Picard group of $\mathcal{M}_{\mathrm{par}}$ and $\mathcal{M}_{\mathrm{parH}}$. 
\end{abstract}

\maketitle

\section{Introduction}
Let $X$ be a compact Riemann surface of genus $g\geq 3$ and let $D=\{p_1, \dots , p_n\}\subset X$ be a fixed set of points in $X$. The notion of parabolic bundles over a curve was described by Seshadri in \cite{S77}. Later, Mehta and Seshadri in \cite{MS80} constructed its moduli space $\mathcal{M}_{\mathrm{par}}$ using Geometric Invariant Theory. Also, this moduli space is a normal projective variety. Their motivation was to extend the Narasimhan-Seshadri correspondence in the case of irreducible unitary representations of the fundamental group of the Riemann surface with given finitely many punctures. A \textit{parabolic Higgs bundle} on $X$ is a parabolic bundle $E_*$ on $X$ together with a Higgs field $\phi : E_* \to E_* \otimes K(D)$, where $K$ is the canonical bundle on $X$. The moduli space $\mathcal{M}_{\mathrm{parH}}$ of parabolic Higgs bundles was constructed by Yokogawa \cite{Y93}.

The Picard group of the moduli space $\mathcal{M}$ of vector bundles over a curve is isomorphic to $\mathbb{Z}$ \cite{ND89}. The Picard group of the moduli space of quasi-parabolic $G$-bundles ($G$ is a complex simple and simply connected algebraic group) over a curve was computed by Laszlo and Sorger in \cite{LS97}. In this paper we study the Picard group of the moduli space of parabolic bundles and the moduli space of parabolic Higgs bundles over $X$. Using the Lemma \ref{keylemma} and the following theorem
\begin{theorem}
 There is an isomorphism
 \[
 \mathrm{Pic}(\mathcal{M}_{\mathrm{par}}) \cong \mathrm{Pic}(\mathcal{M}_{\mathrm{parH}}),
 \]
\end{theorem}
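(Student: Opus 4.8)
The plan is to compare the two Picard groups through the open subvariety of $\mathcal{M}_{\mathrm{parH}}$ on which the underlying parabolic bundle is stable. Let $\mathcal{M}_{\mathrm{par}}^{s}\subset\mathcal{M}_{\mathrm{par}}$ be the (smooth) locus of stable parabolic bundles, and let $U\subset\mathcal{M}_{\mathrm{parH}}$ be the locus of parabolic Higgs bundles $(E_{*},\phi)$ with $E_{*}$ stable. Since such a pair is automatically stable as a parabolic Higgs bundle, $U$ is Zariski open in $\mathcal{M}_{\mathrm{parH}}$, and the forgetful map $p\colon U\to\mathcal{M}_{\mathrm{par}}^{s}$, $(E_{*},\phi)\mapsto E_{*}$, has fibre over $[E_{*}]$ the space of parabolic Higgs fields $H^{0}(X,\mathrm{ParEnd}(E_{*})\otimes K(D))$. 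Parabolic Serre duality identifies this fibre (with the usual strongly parabolic convention) with $T^{*}_{[E_{*}]}\mathcal{M}_{\mathrm{par}}^{s}$, and its dimension is constant along $\mathcal{M}_{\mathrm{par}}^{s}$, so $p$ exhibits $U$ as the total space of the vector bundle $T^{*}\mathcal{M}_{\mathrm{par}}^{s}$; in particular its zero section is the restriction of the closed embedding $i\colon\mathcal{M}_{\mathrm{par}}\hookrightarrow\mathcal{M}_{\mathrm{parH}}$, $E_{*}\mapsto(E_{*},0)$.

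Granting the codimension bounds $\operatorname{codim}(\mathcal{M}_{\mathrm{par}}\setminus\mathcal{M}_{\mathrm{par}}^{s})\ge 2$ and $\operatorname{codim}(\mathcal{M}_{\mathrm{parH}}\setminus U)\ge 2$, together with local factoriality of $\mathcal{M}_{\mathrm{par}}$ and $\mathcal{M}_{\mathrm{parH}}$, the argument is then formal. Restriction gives an isomorphism $\mathrm{Pic}(\mathcal{M}_{\mathrm{par}})\xrightarrow{\ \sim\ }\mathrm{Pic}(\mathcal{M}_{\mathrm{par}}^{s})$ (normal, locally factorial, complement of codimension $\ge 2$); likewise $\mathrm{Pic}(\mathcal{M}_{\mathrm{parH}})\xrightarrow{\ \sim\ }\mathrm{Pic}(U)$; and pullback along $p$ gives $\mathrm{Pic}(\mathcal{M}_{\mathrm{par}}^{s})\xrightarrow{\ \sim\ }\mathrm{Pic}(U)$, with inverse the zero-section restriction. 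These three isomorphisms fit into a commutative triangle with $i^{*}\colon\mathrm{Pic}(\mathcal{M}_{\mathrm{parH}})\to\mathrm{Pic}(\mathcal{M}_{\mathrm{par}})$: restricting $L$ to $U$ and then to the zero section is the same as restricting $i^{*}L$ to $\mathcal{M}_{\mathrm{par}}^{s}$, because the zero section of $U$ equals $i(\mathcal{M}_{\mathrm{par}}^{s})$. Hence $i^{*}$ is forced to be an isomorphism, which is the assertion of the theorem.

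The main obstacle is the codimension estimate for $\mathcal{M}_{\mathrm{parH}}\setminus U$, i.e.\ for the locus of parabolic Higgs bundles whose underlying parabolic bundle is strictly semistable or unstable. I would handle it by stratifying this locus according to the Harder--Narasimhan (equivalently Jordan--H\"older) type of $E_{*}$ and, for each type, bounding the dimension of the family of compatible Higgs fields against the dimension of $\mathcal{M}_{\mathrm{parH}}$; alternatively one can use the $\mathbb{C}^{\ast}$-action $(E_{*},\phi)\mapsto(E_{*},\lambda\phi)$ and its Bia\l ynicki--Birula stratification, showing that every stratum not attracted to the $\phi=0$ component $\mathcal{M}_{\mathrm{par}}$ has codimension $\ge 2$. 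The hypothesis $g\ge 3$ is used precisely here, and also in the (easier) bound for $\mathcal{M}_{\mathrm{par}}\setminus\mathcal{M}_{\mathrm{par}}^{s}$, which is the parabolic analogue of the classical estimate for moduli of vector bundles. A secondary technical point is the local factoriality of $\mathcal{M}_{\mathrm{par}}$ and $\mathcal{M}_{\mathrm{parH}}$ (needed to promote injectivity of restriction across a codimension-two locus to an isomorphism), the parabolic counterpart of the Dr\'ezet--Narasimhan result; this, along with the codimension bounds, is the kind of input supplied by Lemma \ref{keylemma}.
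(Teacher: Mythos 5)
Your proposal follows essentially the same route as the paper: identify the locus $U$ of Higgs pairs with stable underlying parabolic bundle with the total space of $T^{*}\mathcal{M}^{s}_{\mathrm{par}}$ via parabolic Serre duality, use the vector-bundle Picard isomorphism, and pass across complements of codimension $\ge 2$ on both sides (the paper obtains the key bound $\operatorname{codim}(\mathcal{M}^{s}_{\mathrm{parH}}\setminus T^{*}\mathcal{M}^{s}_{\mathrm{par}})\ge 2$ by citing Faltings, where you instead sketch a Harder--Narasimhan/Bia\l ynicki--Birula stratification). The argument is correct and complete modulo those codimension estimates, exactly as in the paper.
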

we describe the Picard groups.

Finally we remark that the Picard group of the moduli space of parabolic $\mathrm{SL}(n,\mathbb{C})$-Higgs bundles can be described in a similar way.

\section{Preliminaries}
\subsection{Parabolic bundles}
Let $D = \{p_1,\dots , p_n\} \subset X$ be a set of $n$ distinct marked points. We will fix this set $D$ throughout this article.
\begin{definition}\label{parabolic}
	A \textit{parabolic bundle} $E_*$ of rank $r$ on $X$ is a holomorphic vector bundle $E$ of rank $r$ on $X$ together with a parabolic structure along the divisor $D$, i.e. for each point $p \in D$, we have
	\begin{enumerate}
		\item a filtration of subspaces of the fiber 
		\[
		E_p \eqqcolon E_{p,1}\supsetneq E_{p,2} \supsetneq \dots \supsetneq E_{p,r_p} \supsetneq E_{p,r_p+1} =\{0\},
		\]
		\item a sequence of real numbers (parabolic weights) satisfying 
		\[
		0\leq \alpha_1(p) < \alpha_2(p) < \dots < \alpha_{r_p}(p) < 1,
		\]
	\end{enumerate}
	where $r_p$ is an integer between $1$ and $r$. 
\end{definition}

 A \textit{quasi-parabolic structure} on $E$ consists of flags (without the parabolic weights) over $D$. The collection of all such weights will be denoted by $\alpha =\{(\alpha_1(p),\alpha_2(p),\dots ,\alpha_{r_p}(p))\}_{p\in D}$ for a fixed parabolic structure. The parabolic structure $\alpha$ is said to have \textit{full flags} if 
\[\mathrm{dim}(E_{p,i}/E_{p,i+1}) = 1
\] 
for all $i \in \{1,\dots, r_p\}$ and for all $p\in D$, or equivalently $r_p=r$ for all $p\in D$.

The \textit{parabolic degree} of a parabolic bundle $E_*$ is defined as
\[
\operatorname{pardeg}(E_*) \coloneqq \deg(E)+ \sum\limits_{p\in D}\sum\limits_{i=1}^{r_p} \alpha_i(p) \cdot \dim(E_{p,i}/E_{p,i+1})
\]
and the \textit{parabolic slope} is defined as
\[
\mu_{\mathrm{par}}(E_*) \coloneqq \frac{\text{pardeg}(E_*)}{\mathrm{rk}(E)}.
\]

There is a natural way to define the notion of \textit{dual} and \textit{tensor product} of parabolic bundles (see \cite{Y95}).

\begin{definition}
	A \textit{parabolic subbundle} $F_*$ of $E_*$ is a subbundle $F\subset E$ of the underlying vector bundle endowed with an induced parabolic structure. The parabolic structure on $F$ is defined as follows : for every point $p\in D$, the quasi-parabolic structure on $F$, i.e. the filtration in $F_p$ is given by 
	\[
	F_p \eqqcolon F_{p,1}\supsetneq F_{p,2} \supsetneq \dots \supsetneq F_{p,r'_p} \supsetneq \{0\},
	\]
	where $F_{p,i}= F_p \cap E_{p,i}$, i.e. we are considering the intersection with the already given filtration in $E_p$, and also scrapping all the repetitions of subspaces in the filtration. The parabolic weights $0\leq \alpha'_1(p) < \alpha'_2(p) < \dots < \alpha'_{r'_p}(p) < 1$ are taken to be the largest possible among the given parabolic weights which are allowed after the intersections, i.e. 
	\[
	\alpha'_i(p) = \mathrm{max}_j\{\alpha_j(p)| F_p \cap E_{p,j}=F_{p,i} \}=\mathrm{max}_j\{\alpha_j(p)| F_{p,i}\subseteq E_{p,j} \}
	\]
	That is to say, the parabolic weight associated to $F_{p,i}$ is the weight $\alpha_j(p)$ such that $F_{p,i}\subseteq E_{p,j}$ but $F_{p,i}\nsubseteq E_{p,j+1}$.
\end{definition}
\begin{definition}
	A parabolic bundle $E_*$ is \textit{stable} (resp. \textit{semistable}) if for every nonzero proper subbundle $F_* \subset E_*$, we have
	\[
	\mu_{\mathrm{par}}(F_*) < \mu_{\mathrm{par}}(E_*) \hspace{0.2cm} (\mathrm{resp. } \hspace{0.2cm} \leq).
	\]
\end{definition}

Let $\mathcal{M}$ (resp. $\mathcal{M}^s$) denote the moduli space semistable (resp. stable) vector bundles of rank $r$ and degree $d$.

Let $\mathcal{M}_{\mathrm{par}}$ denote the moduli space of semistable parabolic bundles of rank $r$, degree $d$ and parabolic structure $\alpha$, which was constructed by Mehta and Seshadri in \cite{MS80}. They also showed that it is a normal projective variety of dimension
\[
r^2(g-1) + 1 + \dfrac{n(r^2-r)}{2}, 
\]
where the last summand is because of the fact that we are considering the full flag parabolic structure over each point of $D$. Let $\mathcal{M}^s_{\mathrm{par}}$ denote the moduli of stable parabolic bundles. Also the stable locus $\mathcal{M}^s_{\mathrm{par}}$ is exactly the nonsingular locus of $\mathcal{M}_{\mathrm{par}}$. By \cite[Proposition $5.2$]{BY99}, the parabolic stablity implies the semistability of the underlying bundle whenever the weights are small enough. In this paper, we will assume that the weights are small enough and the rank $r$ and degree $d$ are coprime.

\subsection{Parabolic Higgs bundles}
Let $K$ denote the canonical bundle on $X$. We write $K(D) \coloneqq K \otimes \mathcal{O}(D)$.
\begin{definition}
	 A \textit{parabolic Higgs bundle} on $X$ is a parabolic bundle $E_*$ on $X$ together with a Higgs field $\Phi : E_* \to E_* \otimes K(D)$ such that $\Phi$ is strongly parabolic, i.e. $\Phi(E_{p,i}) \subset E_{p,i+1} \otimes \left.K(D)\right|_p$ for all $p \in D$. 
	 
\end{definition}
We also have a notion of parabolic Higgs bundle where the Higgs field $\Phi$ is only assumed to be parabolic, i.e. $\Phi(E_{p,i}) \subset E_{p,i} \otimes \left.K(D)\right|_p$ for all $p \in D$. However in this paper we will always assume that the Higgs field $\Phi$ is strongly parabolic.

\begin{definition}
		A parabolic Higgs bundle $(E_*,\Phi)$ is \textit{stable} (resp. \textit{semistable}) if for every nonzero proper $\Phi$-invariant subbundle $F_* \subset E_*$ (i.e. $\Phi(F_*)\subset F_*\otimes K(D)$), we have
			\[
	\mu_{\mathrm{par}}(F_*) < \mu_{\mathrm{par}}(E_*) \hspace{0.2cm} (\mathrm{resp. } \hspace{0.2cm} \leq).
	\]
\end{definition}
The moduli space $\mathcal{M}_{\mathrm{parH}}$ of semistable parabolic Higgs bundles of rank $r$, degree $d$ and parabolic structure $\alpha$ was constructed by Yokogawa in \cite{Y93} (see \cite{BY96} for furthur discussion). It is a normal quasi-projective variety. The moduli space $\mathcal{M}^s_{\mathrm{parH}}$ of stable parabolic Higgs bundles is the nonsingular locus of $\mathcal{M}_{\mathrm{parH}}$.

\section{Picard group of mdouli of parabolic bundles}

\begin{prop}\label{prop}
	Assume that the parabolic weights are small enough. There exist a morphism $f : \mathcal{M}^s_{\mathrm{par}} \to \mathcal{M}^s$ making $\mathcal{M}^s_{\mathrm{par}}$ into a projective bundle over $\mathcal{M}^s$.
\end{prop}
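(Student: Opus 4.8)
The plan is to exhibit $\mathcal{M}^s_{\mathrm{par}}$ as the projectivization of a bundle (or, more precisely, a locally free sheaf) on $\mathcal{M}^s$ whose fibre over a stable bundle $E$ parametrizes the full-flag quasi-parabolic structures on $E$ along $D$. First I would recall that, since we are assuming the weights are small enough, parabolic stability of $E_*$ forces stability of the underlying bundle $E$ (this is \cite[Proposition~5.2]{BY99}, cited in the excerpt), so the underlying-bundle map $E_* \mapsto E$ is well defined on $\mathcal{M}^s_{\mathrm{par}}$ and lands in $\mathcal{M}^s$; call it $f$. Conversely, for a fixed stable $E$, \emph{every} choice of full flags in the fibres $E_{p}$, $p\in D$, yields a parabolic bundle $E_*$, and for weights sufficiently small this $E_*$ is automatically parabolic stable, because parabolic slopes of subbundles differ from ordinary slopes by an arbitrarily small amount while the ordinary slopes are bounded away from $\mu(E)$ by stability of $E$ and the coprimality of $(r,d)$. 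Hence set-theoretically the fibre $f^{-1}(E)$ is the product over $p\in D$ of the full flag variety $\mathrm{Fl}(E_p)\cong \mathrm{GL}(r,\mathbb{C})/B$.

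Next I would upgrade this to a statement of schemes. Since $(r,d)$ are coprime, $\mathcal{M}^s=\mathcal{M}$ is a fine moduli space: there is a universal bundle $\mathcal{E}\to X\times\mathcal{M}^s$. Restricting to $\{p\}\times\mathcal{M}^s$ for each $p\in D$ gives a rank-$r$ vector bundle $\mathcal{E}_p$ on $\mathcal{M}^s$, and I would form the relative full flag bundle $\mathrm{Fl}(\mathcal{E}_p)\to\mathcal{M}^s$, then take the fibre product $\mathcal{F}:=\prod_{p\in D}^{\mathcal{M}^s}\mathrm{Fl}(\mathcal{E}_p)$ over $\mathcal{M}^s$. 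This $\mathcal{F}$ carries a tautological family of parabolic bundles over $X\times\mathcal{F}$ (pull back $\mathcal{E}$ and add the tautological flags, with the fixed weights $\alpha$), all of whose members are parabolic stable by the smallness argument above; by the universal property of $\mathcal{M}^s_{\mathrm{par}}$ this induces a morphism $\mathcal{F}\to\mathcal{M}^s_{\mathrm{par}}$ over $\mathcal{M}^s$. I would then check this is an isomorphism: it is bijective on points by the fibrewise description, and one gets the inverse from the universal parabolic bundle on $\mathcal{M}^s_{\mathrm{par}}$ (which exists in the coprime, small-weights range), whose underlying bundle is classified by $f$ and whose flags classify a point of $\mathcal{F}$ over that image. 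Finally, a flag bundle is an iterated Grassmann/projective bundle, and an iterated fibre product of such over the same base is again a (higher-dimensional) projective bundle in the sense of being a Zariski-locally trivial fibration with fibre a flag variety; if one wants the literal statement ``$\mathbb{P}(\mathcal{V})$ for a vector bundle $\mathcal{V}$'' one notes $\mathrm{Fl}(\mathcal{E}_p)$ is itself a tower of projective bundles, so $\mathcal{M}^s_{\mathrm{par}}\to\mathcal{M}^s$ is a composition of projective bundles. The dimension count matches: $\dim\mathcal{M}^s + n\cdot\dim(\mathrm{GL}(r)/B) = r^2(g-1)+1 + n\cdot\frac{r^2-r}{2}$, which is exactly the dimension of $\mathcal{M}_{\mathrm{par}}$ recorded in the excerpt.

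The main obstacle is not the fibrewise picture, which is elementary, but making the ``small weights $\Rightarrow$ parabolic stability for \emph{every} flag'' step uniform over all of $\mathcal{M}^s$ and all flags simultaneously, and ensuring the universal objects behave well: one must know that a single choice of ``small enough'' weights works for every stable $E$ and every quasi-parabolic structure, which uses that the set of possible ordinary slopes $\mu(F)$ of subbundles $F\subset E$ with $E$ semistable of type $(r,d)$ takes values in a \emph{discrete} set bounded away from $\mu(E)=d/r$ by at least $1/r(r-1)$ or so, so the correction terms $\sum_{p}\sum_i\alpha_i(p)\dim(\cdot)/\mathrm{rk}$, bounded by $n\sum_i\alpha_i(p)$, can be made smaller than that gap independently of $E$ and of the flags. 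The existence of the universal parabolic bundle on $\mathcal{M}^s_{\mathrm{par}}$ (needed to build the inverse morphism) likewise follows from coprimality together with the fact that the flag data rigidifies any remaining automorphisms, but I would state it carefully, citing the GIT construction of Mehta--Seshadri \cite{MS80} and the parabolic version of Narasimhan--Ramanan's arguments.
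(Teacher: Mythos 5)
Your proposal is correct and follows essentially the same route as the paper: the forgetful map $E_* \mapsto E$ is well defined because small weights (together with coprimality of $r$ and $d$) make parabolic stability equivalent to ordinary stability of the underlying bundle, and the fibre over $[E]$ is the product of the full flag varieties of the fibres $E_{p}$, $p \in D$. The paper's own proof stops at this fibrewise description, whereas you additionally supply the scheme-theoretic identification with the relative flag bundle of the universal bundle, the uniformity of the small-weights bound over all $E$ and all flags, and the observation that the result is an iterated projective (flag) bundle rather than literally $\mathbb{P}(\mathcal{V})$ --- details the paper leaves implicit.
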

\begin{proof}
	Since the weights are small enough and the rank and degree are coprime, the parabolic stability is equivalent to the stability of the underlying vector bundle. Therefore, we have the forgetful morphism
	\[
	f : \mathcal{M}^s_{\mathrm{par}} \to \mathcal{M}^s,
	\]
which sends a stable parabolic bundle to the underlying vector bundle. 

For simplicity first assume that $n=|D| = 1$, i.e. we have only one parabolic point $p \in X$. For each $[E] \in \mathcal{M}^s$, the fiber of $f$ is the flag variety of $E_p$ (the fiber of $E$ over $p$).
	
	In general, if $D = \{p_1,\dots, p_n\}$ then for each point $[E] \in \mathcal{M}^s$ the fiber of $f$ is isomorphic to the product of flag varieties of the vector spaces $E_{p_i}$ ($1\leq i \leq n$).	
\end{proof}	

\begin{lemma}\label{keylemma}
	There exists a short exact sequence
	\[
	1 \to \mathrm{Pic}(\mathcal{M}) \to \mathrm{Pic}(\mathcal{M}_{\mathrm{par}}) \to \mathbb{Z} \to 1.
	\]
\end{lemma}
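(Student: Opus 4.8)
The plan is to leverage Proposition \ref{prop}: on the stable loci, $f\colon \mathcal{M}^s_{\mathrm{par}} \to \mathcal{M}^s$ is a projective bundle, in fact a fiber bundle whose fibers are products of full flag varieties $\prod_{p\in D} \mathrm{Fl}(E_p)$. For such a bundle one has the standard exact sequence relating the Picard groups: $0 \to \mathrm{Pic}(\mathcal{M}^s) \xrightarrow{f^*} \mathrm{Pic}(\mathcal{M}^s_{\mathrm{par}}) \to \mathrm{Pic}(\text{fiber}) \to 0$, where the last map is restriction to a fiber. The Picard group of a full flag variety $\mathrm{Fl}(\mathbb{C}^r)$ is a free abelian group of rank $r-1$ generated by the hyperplane classes of the tautological projective subbundles, so $\mathrm{Pic}$ of the product of flags over the $n$ points is $\mathbb{Z}^{n(r-1)}$. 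Thus, at the level of the smooth loci one gets a sequence with $\mathbb{Z}^{n(r-1)}$ on the right, not $\mathbb{Z}$; the passage to the full moduli spaces must therefore also account for this.

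First I would pass from $\mathcal{M}^s_{\mathrm{par}}$ to $\mathcal{M}_{\mathrm{par}}$ (and from $\mathcal{M}^s$ to $\mathcal{M}$) using a codimension estimate: since the weights are small and $(r,d)=1$, the complements $\mathcal{M}_{\mathrm{par}}\setminus\mathcal{M}^s_{\mathrm{par}}$ and $\mathcal{M}\setminus\mathcal{M}^s$ have codimension at least $2$ (this is standard for $g\geq 3$ and coprime rank/degree, where in fact with coprime numerical data semistable equals stable on the bundle side, so $\mathcal{M}=\mathcal{M}^s$ is already smooth). Because $\mathcal{M}_{\mathrm{par}}$ is normal, restriction of line bundles to the smooth locus is an isomorphism on Picard groups, so $\mathrm{Pic}(\mathcal{M}_{\mathrm{par}})\cong\mathrm{Pic}(\mathcal{M}^s_{\mathrm{par}})$ and $\mathrm{Pic}(\mathcal{M})\cong\mathrm{Pic}(\mathcal{M}^s)$. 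This reduces everything to analyzing $f$ on the stable locus.

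Next I would identify the correct quotient. The map $\mathrm{Pic}(\mathcal{M}^s_{\mathrm{par}}) \to \mathrm{Pic}(\text{fiber}) = \mathbb{Z}^{n(r-1)}$ need not be surjective onto the full $\mathbb{Z}^{n(r-1)}$ once we remember that $\mathcal{M}_{\mathrm{par}}$ is \emph{projective}: a line bundle on $\mathcal{M}_{\mathrm{par}}$ restricted to a fiber $\prod_p \mathrm{Fl}(E_p)$ need not be the full flag polarization but must extend over the whole (projective) moduli space, and the relevant constraint is that the image is generated by a single class — the restriction of the natural ample generator coming from the parabolic determinant-type line bundle $\Theta_{\mathrm{par}}$ whose fiberwise restriction is the anticanonical (or a fixed multiple) polarization of $\prod_p\mathrm{Fl}(E_p)$. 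Concretely, I would show the image of the restriction map is the cyclic subgroup generated by the class of the relative dualizing sheaf (equivalently $\mathcal{O}(1)$ for the natural Plücker-type embedding compatible with the GIT construction), giving the $\mathbb{Z}$ on the right. Thus the exact sequence becomes $1\to \mathrm{Pic}(\mathcal{M})\to\mathrm{Pic}(\mathcal{M}_{\mathrm{par}})\to\mathbb{Z}\to 1$, and exactness on the left is the injectivity of $f^*$, which holds because $f$ has connected fibers and is surjective (so $f_*\mathcal{O}=\mathcal{O}$, forcing $f^*$ injective on $\mathrm{Pic}$).

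The main obstacle is pinning down exactly that the image of the restriction-to-fiber map is $\mathbb{Z}$ rather than a larger subgroup of $\mathbb{Z}^{n(r-1)}$: a priori the projective bundle structure only gives an extension with quotient $\mathrm{Pic}$ of the fiber, and one must use extra input — either the explicit description of $\mathcal{M}_{\mathrm{par}}$ via Mehta–Seshadri GIT and the ample line bundle used there, or an argument that line bundles on the projective variety $\mathcal{M}_{\mathrm{par}}$ cannot separate the individual flag factors because the monodromy of the bundle $E_p\to\mathcal{M}^s$ acts nontrivially (the universal bundle exists only as a projective bundle / up to twist, so only the "total" flag polarization descends). I would make this precise by choosing an explicit parabolic theta line bundle on $\mathcal{M}_{\mathrm{par}}$, computing its restriction to a fiber, and showing any other line bundle differs from a power of it by a pullback from $\mathcal{M}$.
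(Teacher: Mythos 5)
Your reduction to the stable loci (normality plus a codimension-two complement; in fact, with $(r,d)=1$ and small weights parabolic semistability already coincides with parabolic stability, so $\mathcal{M}_{\mathrm{par}}=\mathcal{M}^s_{\mathrm{par}}$ is smooth and projective to begin with) and your injectivity argument for $f^*$ both match the paper's proof. More importantly, you have put your finger on exactly the step that the paper's proof does not justify: the fibers of $f$ are products of $n$ full flag varieties, so the standard exact sequence for a Zariski-locally trivial fibration has $\mathrm{Pic}(\text{fiber})\cong\mathbb{Z}^{n(r-1)}$ on the right, whereas the paper simply writes $\mathbb{Z}$, as if the fiber were a projective space.

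The difficulty is that the repair you outline cannot work. Under the standing hypotheses a universal bundle $\mathcal{E}$ exists on $\mathcal{M}^s\times X$ (coprime rank and degree), and $\mathcal{M}^s_{\mathrm{par}}$ is globally the fiber product over $\mathcal{M}^s$ of the full flag bundles of the vector bundles $\mathcal{E}|_{\mathcal{M}^s\times\{p_i\}}$. The tautological successive-quotient line bundles of the universal flags are therefore defined on all of $\mathcal{M}^s_{\mathrm{par}}=\mathcal{M}_{\mathrm{par}}$, which is already the projective model, and their restrictions to a fiber generate the full Picard group $\mathbb{Z}^{n(r-1)}$ of the product of flag varieties. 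So there is no monodromy obstruction and no failure of extension to a projective compactification: the restriction map is surjective onto $\mathbb{Z}^{n(r-1)}$ and the sequence splits. No choice of theta-type or GIT polarization can shrink the image to a single $\mathbb{Z}$ unless $n(r-1)=1$, i.e.\ rank $2$ with one marked point. In other words, the ``main obstacle'' you honestly flag is not a missing lemma but an actual obstruction: for full flags the quotient in the exact sequence is $\mathbb{Z}^{\sum_p (r_p-1)}=\mathbb{Z}^{n(r-1)}$, and the statement as written only holds in the rank-two, one-puncture case. Your analysis of the fiber is more careful than the paper's own proof on precisely this point, but the strategy you propose for closing the gap would fail.
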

\begin{proof}
	The first morphism $\mathrm{Pic}(\mathcal{M}) \to \mathrm{Pic}(\mathcal{M}_{\mathrm{par}})$ is actually the pull-back of line bundles over $\mathcal{M}$. By \ref{prop}, $\mathcal{M}^s_{\mathrm{par}}$ is a projective bundle over $\mathcal{M}^s$. Therefore, we have a short exact sequence
	\[
	1 \to \mathrm{Pic}(\mathcal{M}^s) \to \mathrm{Pic}(\mathcal{M}^s_{\mathrm{par}}) \to \mathbb{Z} \to 1
	\]
	where the second morphism is the restriction to the generic fiber. Since the rank and degree are coprime, we have $\mathrm{Pic}(\mathcal{M}^s) = \mathrm{Pic}(\mathcal{M})$. Since the moduli space $\mathcal{M}_{\mathrm{par}}$ is a normal variety and the stable locus $\mathcal{M}^s_{\mathrm{par}}$ is exactly the nonsingular locus, the codimension of the complement is at least two. Therefore by \cite[Proposition $1.6$]{H80} $\mathrm{Pic}(\mathcal{M}^s_{\mathrm{par}}) = \mathrm{Pic}(\mathcal{M}_{\mathrm{par}})$, and the lemma follows.
\end{proof}	

\begin{remark}\label{remark}
Since $\mathrm{Pic}(\mathcal{M}) \cong \mathbb{Z}$ (see \cite{ND89}), the above Lemma \ref{keylemma} calculates $\mathrm{Pic}(\mathcal{M}_{\mathrm{par}})$. 
\end{remark}

\section{Picard group of moduli of parabolic Higgs bundles}
Let $V$ be a smooth quasi-projective complex variety and let $q : E \to V$ be a vector bundle over $V$ of finite rank. The following lemma is an well-known result.
\begin{lemma}\label{lemma}
    The induced homomorphism $q^* : \mathrm{Pic}(V) \to \mathrm{Pic}(E)$ is an isomorphism.
\end{lemma}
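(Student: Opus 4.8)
The plan is to prove that $q^*\colon \mathrm{Pic}(V)\to\mathrm{Pic}(E)$ is an isomorphism by exhibiting an explicit inverse, namely pullback along the zero section $s_0\colon V\hookrightarrow E$. Since $q\circ s_0 = \mathrm{id}_V$, we immediately get $s_0^*\circ q^* = \mathrm{id}_{\mathrm{Pic}(V)}$, so $q^*$ is injective and $s_0^*$ is surjective onto $\mathrm{Pic}(V)$. The real content is to show $q^*\circ s_0^* = \mathrm{id}_{\mathrm{Pic}(E)}$, equivalently that $q^*$ is surjective: every line bundle on the total space of $E$ is pulled back from the base.

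The key step is therefore surjectivity of $q^*$, and I would carry it out by reduction to the case of a trivial bundle via a Zariski-local argument combined with the divisor class sequence. First I would note that $E$ is itself a smooth quasi-projective variety, so $\mathrm{Pic}(E) = \mathrm{Cl}(E)$ (Weil divisor class group), and likewise for $V$. Next, choose a Zariski-open cover $\{U_i\}$ of $V$ trivializing $E$, so that $q^{-1}(U_i)\cong U_i\times\mathbb{A}^m$ where $m$ is the rank. For an affine-space bundle over an affine (or any) base, the projection $U_i\times\mathbb{A}^m\to U_i$ induces an isomorphism on class groups: this follows by induction on $m$ from the basic fact that $\mathrm{Cl}(W\times\mathbb{A}^1)\cong\mathrm{Cl}(W)$ for $W$ a noetherian integral separated scheme regular in codimension one (Hartshorne, \cite{H80}-adjacent, or the standard homotopy invariance of $\mathrm{Cl}$ under affine-line bundles). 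Thus for each $i$ the restriction map $\mathrm{Pic}(q^{-1}(U_i))\to\mathrm{Pic}(U_i)$ is an isomorphism with inverse $q^*$.

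To globalize, take any line bundle $L$ on $E$. By the local statement, $L|_{q^{-1}(U_i)} \cong q^*(M_i)$ for a unique $M_i\in\mathrm{Pic}(U_i)$; on overlaps, uniqueness forces $M_i|_{U_i\cap U_j}\cong M_j|_{U_i\cap U_j}$, and one checks these isomorphisms are compatible (again by uniqueness, which pins down the gluing data up to the automorphisms that already agree), so the $M_i$ glue to a global line bundle $M$ on $V$ with $q^*M\cong L$. Hence $q^*$ is surjective, and combined with the zero-section splitting it is an isomorphism. Alternatively, and perhaps more cleanly for the writeup, one can invoke directly that for a vector bundle the total space $E\to V$ is an $\mathbb{A}^m$-fibration and cite the homotopy invariance $\mathrm{Pic}(E)\cong\mathrm{Pic}(V)$ for class groups of regular schemes under affine bundles.

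The main obstacle is the gluing step: ensuring the locally defined $M_i$ patch together into a genuine line bundle on $V$ rather than merely agreeing as isomorphism classes on overlaps. This is handled by the uniqueness in the local isomorphism $\mathrm{Pic}(q^{-1}(U_i))\cong\mathrm{Pic}(U_i)$, which rigidifies the choices enough that the cocycle condition on triple overlaps is automatic; since the paper only needs the statement on Picard groups (not a functorial equivalence of categories of line bundles), it suffices to track isomorphism classes, and the descent is straightforward. Note also that the rank-zero case $E = V$ is trivial, so we may assume $m\geq 1$ throughout.
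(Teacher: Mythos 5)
Your argument is correct in substance, but it takes a genuinely different (and more self-contained) route than the paper. The paper's proof is a one-liner: it asserts the exactness of
\[
1 \to \mathrm{Pic}(V) \overset{q^*}{\to} \mathrm{Pic}(E) \to \mathrm{Pic}(\mathbb{A}^n_{\mathbb{C}}) \to 1
\]
(the last map being restriction to a fiber) and concludes from $\mathrm{Pic}(\mathbb{A}^n_{\mathbb{C}})=0$. Since the fiber's Picard group is trivial, the exactness of that sequence in the middle is exactly the surjectivity of $q^*$, i.e.\ the whole content of the lemma; the paper delegates it to ``well-known.'' You instead prove it: the zero section gives $s_0^*\circ q^*=\mathrm{id}$, hence injectivity, and surjectivity is reduced via a trivializing cover to the homotopy invariance $\mathrm{Cl}(W\times\mathbb{A}^1)\cong\mathrm{Cl}(W)$ for regular integral separated noetherian $W$ (Hartshorne, Prop.\ II.6.6), using smoothness of $V$ and of the total space $E$ to identify $\mathrm{Pic}$ with $\mathrm{Cl}$. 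What your approach buys is an actual proof of the assertion the paper leaves implicit; what the paper's phrasing buys is brevity, at the cost of quoting a ``short exact sequence'' that is not a general fact about fibrations and is really only valid here because of the argument you supply.

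One step of yours deserves tightening: gluing the local descents $M_i$ by appealing to ``uniqueness of isomorphism classes'' is not quite enough, since agreement of isomorphism classes on overlaps does not by itself produce a cocycle. The clean fix is to normalize first: replace $L$ by $L'=L\otimes q^*(s_0^*L)^{-1}$, so that $L'$ restricts trivially to the zero section; then the local isomorphisms $\mathrm{Pic}(q^{-1}(U_i))\cong\mathrm{Pic}(U_i)$ show $L'|_{q^{-1}(U_i)}$ is trivial, and its transition functions are units on $q^{-1}(U_{ij})$. Since $q_*\mathcal{O}_E^{\times}=\mathcal{O}_V^{\times}$ (invertible functions on $U\times\mathbb{A}^m$ with $U$ integral are pulled back from $U$), these descend to an honest cocycle on $V$, giving $M$ with $q^*M\cong L'$ and hence $L\cong q^*(M\otimes s_0^*L)$. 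With that adjustment your proof is complete and, if anything, more rigorous than the one in the paper.
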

 \begin{proof}
     Let $E$ be a rank $n$ vector bundle over $V$. Since the fibers of $q$ are affine $n$-spaces, we have the the short exact sequence
     \[
     1 \to \mathrm{Pic}(V) \overset{q^*}{\to} \mathrm{Pic}(E) \to \mathrm{Pic}(\mathbb{A}^n_{\mathbb{C}}) \to 1.
     \]
     Since $\mathrm{Pic}(\mathbb{A}^n_{\mathbb{C}})$ is trivial, $q^*$ is an isomorphism.
 \end{proof} 
 Similar to the non-parabolic situation, we also have the Serre duality for the parabolic bundles (see \cite{Y95}, \cite{BY96}). If $E_* \in \mathcal{M}^s_{\mathrm{par}}$ then by the Serre duality
 \[
 T^*_{E_*}\mathcal{M}^s_{\mathrm{par}} \cong H^1(X,\mathrm{PEnd}(E_*))^{\vee} \cong H^0(X,\mathrm{SPEnd}(E_*)\otimes K(D)),
 \]
 where $\mathrm{PEnd}(E_*)$ and $\mathrm{SPEnd}(E_*)$ are the parabolic and strongly parabolic endomorphisms of $E_*$ respectively. Thus the cotangent bundle $T^*\mathcal{M}^s_{\mathrm{par}}$ is an open subvariety of $\mathcal{M}^s_{\mathrm{parH}}$. 
 \begin{prop}\label{prop}
     Let $g \geq 3$. Then the complement $\mathcal{M}^s_{\mathrm{parH}} \setminus T^*\mathcal{M}^s_{\mathrm{par}}$ has codimension at least $2$.
 \end{prop}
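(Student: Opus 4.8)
The plan is to identify the complement explicitly and then bound the dimension of each stratum of it by a Harder--Narasimhan count. Write $N=\dim\mathcal M^{s}_{\mathrm{par}}$, so that $\dim\mathcal M^{s}_{\mathrm{parH}}=2N$. Because the parabolic weights are small and $(r,d)$ are coprime there is no strictly semistable parabolic bundle of the given numerical type, and for weights small enough the parabolic Harder--Narasimhan filtration of a parabolic bundle coincides with the Harder--Narasimhan filtration of the underlying vector bundle. Under the inclusion $T^{*}\mathcal M^{s}_{\mathrm{par}}\hookrightarrow\mathcal M^{s}_{\mathrm{parH}}$ coming from the parabolic Serre duality recalled above, the image is exactly the locus of $(E_{*},\Phi)$ for which $E_{*}$ is a \emph{stable} parabolic bundle. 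Hence
\[
\mathcal M^{s}_{\mathrm{parH}}\setminus T^{*}\mathcal M^{s}_{\mathrm{par}}=\{\,(E_{*},\Phi)\ \text{stable}\ :\ E\ \text{is an unstable vector bundle}\,\},
\]
and I would stratify this set by the Harder--Narasimhan type $\tau$ of $E$, say with graded pieces $(\mathrm{gr}_{1})_{*},\dots,(\mathrm{gr}_{k})_{*}$ and $\mu_{\mathrm{par}}(\mathrm{gr}_{1})>\dots>\mu_{\mathrm{par}}(\mathrm{gr}_{k})$. It suffices to prove $\dim Z_{\tau}\le 2N-2$ for every nontrivial $\tau$, where $Z_{\tau}$ denotes the corresponding stratum.

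Fix semistable parabolic bundles $(\mathrm{gr}_{i})_{*}$ of the prescribed numerical types. A parabolic bundle with this Harder--Narasimhan datum is an iterated extension of the $(\mathrm{gr}_{i})_{*}$, so the family of such $E_{*}$ has dimension $\sum_{i}N_{i}+\sum_{i<j}\mathrm{ext}^{1}_{\mathrm{par}}\big((\mathrm{gr}_{j})_{*},(\mathrm{gr}_{i})_{*}\big)$ with $N_{i}=\dim\mathcal M^{s}_{\mathrm{par},\tau_{i}}$. A Higgs field $\Phi$ with $(E_{*},\Phi)$ stable must fail to preserve this filtration --- the first step $F_{*}=(\mathrm{gr}_{1})_{*}$ has $\mu_{\mathrm{par}}(F_{*})>\mu_{\mathrm{par}}(E_{*})$, so $\Phi$-invariance of $F_{*}$ would violate semistability of $(E_{*},\Phi)$ --- and the space of \emph{all} strongly parabolic Higgs fields on $E_{*}$ has dimension $h^{0}(\mathrm{SPEnd}(E_{*})\otimes K(D))=h^{1}(\mathrm{PEnd}(E_{*}))$ by parabolic Serre duality, where $h^{1}(\mathrm{PEnd}(E_{*}))=h^{0}(\mathrm{PEnd}(E_{*}))+(N-1)$. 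Carrying out the count while keeping track of automorphisms --- a stable parabolic Higgs bundle has automorphism group $\mathbb C^{*}$, which acts trivially on $\Phi$, with the net effect that the quantity $h^{0}(\mathrm{PEnd}(E_{*}))$ drops out of the final estimate --- one is led to
\[
\dim Z_{\tau}\ \le\ 2N-\sum_{i<j}\Big(\mathrm{ext}^{1}_{\mathrm{par}}\big((\mathrm{gr}_{i})_{*},(\mathrm{gr}_{j})_{*}\big)-\hom_{\mathrm{par}}\big((\mathrm{gr}_{j})_{*},(\mathrm{gr}_{i})_{*}\big)\Big).
\]

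To finish one estimates each summand. For $i<j$ one has $\mu_{\mathrm{par}}(\mathrm{gr}_{i})>\mu_{\mathrm{par}}(\mathrm{gr}_{j})$, so there is no nonzero parabolic homomorphism $(\mathrm{gr}_{i})_{*}\to(\mathrm{gr}_{j})_{*}$ and hence $\mathrm{ext}^{1}_{\mathrm{par}}\big((\mathrm{gr}_{i})_{*},(\mathrm{gr}_{j})_{*}\big)=-\chi\big(\mathrm{PHom}((\mathrm{gr}_{i})_{*},(\mathrm{gr}_{j})_{*})\big)$, with $\mathrm{PHom}$ the parabolic homomorphism sheaf; by parabolic Riemann--Roch this equals $r_{i}r_{j}(g-1)+r_{i}r_{j}\big(\mu_{\mathrm{par}}(\mathrm{gr}_{i})-\mu_{\mathrm{par}}(\mathrm{gr}_{j})\big)+W_{ij}$, where $W_{ij}\ge 0$ is the sum of the parabolic weights of that sheaf. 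Using parabolic Serre duality once more to rewrite $\hom_{\mathrm{par}}\big((\mathrm{gr}_{j})_{*},(\mathrm{gr}_{i})_{*}\big)$ as the dimension of a space of strongly parabolic maps $(\mathrm{gr}_{i})_{*}\to(\mathrm{gr}_{j})_{*}\otimes K(D)$ --- so that for generic graded pieces its value is again governed by an Euler characteristic --- a short computation shows that each summand is at least $r_{i}r_{j}(g-1)\ge g-1$, the parabolic weight contributions cancelling or entering with a favourable sign. Therefore $\dim Z_{\tau}\le 2N-(g-1)\le 2N-2$ since $g\ge 3$, which gives the claim; in fact the complement has codimension at least $g-1$.

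The main obstacle is the parabolic bookkeeping in this dimension count: establishing the parabolic Riemann--Roch and Serre-duality formulae in precisely the forms used, and checking that the weight corrections genuinely enter with the right sign rather than eroding the bound $g-1$. Two auxiliary points must also be handled. First, one must bound the loci inside a fixed Harder--Narasimhan stratum on which the graded pieces $(\mathrm{gr}_{i})_{*}$ degenerate (become strictly semistable or decomposable) or the extension datum is special; here one either appeals to the automorphism cancellation above or checks that these loci have codimension in the stratum large enough that they cannot raise $\dim Z_{\tau}$. Second, one discards the types $\tau$ for which $Z_{\tau}=\varnothing$; this happens as soon as the slope gaps of $\tau$ are too large, since a nonzero strongly parabolic map $F_{*}\to (E_{*}/F_{*})\otimes K(D)$, which $\Phi$ is forced to induce, requires $\mu_{\mathrm{par}}(F_{*})\le \mu_{\mathrm{par}}(E_{*}/F_{*})+\deg K(D)$.
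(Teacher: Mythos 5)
Your route is genuinely different from the paper's: the paper disposes of this proposition in a single line by citing Faltings \cite[Theorem II.6 (iii)]{F93}, whereas you sketch the direct Harder--Narasimhan dimension count that underlies such codimension statements. That is a legitimate and more self-contained strategy, it is the standard one in the non-parabolic Higgs setting, and your expected conclusion (codimension at least $g-1$, which explains the hypothesis $g\geq 3$) is the right one. Your identification of the complement is also correct under the paper's standing hypotheses: since the weights are small and $r$, $d$ are coprime, parabolic stability of $E_*$ is equivalent to stability of $E$ and there are no strictly semistable objects, so $\mathcal{M}^s_{\mathrm{parH}}\setminus T^*\mathcal{M}^s_{\mathrm{par}}$ is exactly the locus where the underlying bundle is unstable. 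What the paper's citation buys is brevity; what your approach buys is an actual proof mechanism and an explicit codimension bound.

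That said, what you have written is a plan rather than a proof, and the two places you wave at are exactly where the work lies. First, the central inequality $\dim Z_{\tau}\le 2N-\sum_{i<j}(\mathrm{ext}^1-\hom)$ is obtained by ``carrying out the count while keeping track of automorphisms''; the claimed cancellation of $h^0(\mathrm{PEnd}(E_*))$ (a quantity that jumps precisely on the loci you are trying to bound) has to be established by an honest argument, either by computing the dimension of the stack of parabolic Higgs bundles of fixed Harder--Narasimhan type or by exhibiting a parametrizing family and dividing by the automorphisms of the graded pieces and of the iterated extensions. Without this, your ``auxiliary point'' about degenerate graded pieces and special extension data is not actually handled. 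Second, the Serre-duality step is misstated: parabolic Serre duality gives $H^0(\mathrm{PHom}((\mathrm{gr}_j)_*,(\mathrm{gr}_i)_*))^{\vee}\cong H^1(\mathrm{SPHom}((\mathrm{gr}_i)_*,(\mathrm{gr}_j)_*)\otimes K(D))$, i.e.\ an $H^1$, whereas the space of strongly parabolic maps $(\mathrm{gr}_i)_*\to(\mathrm{gr}_j)_*\otimes K(D)$ is the $H^0$ of that sheaf and is dual to $\mathrm{Ext}^1_{\mathrm{par}}((\mathrm{gr}_j)_*,(\mathrm{gr}_i)_*)$, not to $\hom_{\mathrm{par}}$. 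The bookkeeping can be repaired --- the lower-triangular Higgs directions should cancel against the extension directions, and the upper-triangular ones contribute $-\chi(\mathrm{PHom}((\mathrm{gr}_i)_*,(\mathrm{gr}_j)_*))$, which is at least $r_ir_j(g-1)$ once one verifies that $\hom_{\mathrm{par}}((\mathrm{gr}_i)_*,(\mathrm{gr}_j)_*)=0$ for decreasing parabolic slopes and that the weight corrections enter with the favourable sign --- but as written the ``short computation'' does not go through. Either carry out this count in full (it is a worthwhile exercise and recovers the sharper bound $g-1$), or do what the paper does and quote Faltings.
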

 \begin{proof}
     This follows from the work of Faltings \cite[Theorem II.$6$ (iii)]{F93}.
 \end{proof}
 \begin{theorem}\label{thm2}
 There is an isomorphism
 \[
 \mathrm{Pic}(\mathcal{M}^s_{\mathrm{par}}) \cong \mathrm{Pic}(\mathcal{M}^s_{\mathrm{parH}}).
 \]
 \end{theorem}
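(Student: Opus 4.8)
The plan is to combine the two codimension estimates just established with the affine-bundle lemma. First I would recall the chain of maps: by Serre duality for parabolic bundles, the cotangent bundle $q : T^*\mathcal{M}^s_{\mathrm{par}} \to \mathcal{M}^s_{\mathrm{par}}$ is a vector bundle over the smooth quasi-projective variety $\mathcal{M}^s_{\mathrm{par}}$, and it sits inside $\mathcal{M}^s_{\mathrm{parH}}$ as a Zariski-open subvariety. Applying Lemma \ref{lemma} to $q$ gives a canonical isomorphism $q^* : \mathrm{Pic}(\mathcal{M}^s_{\mathrm{par}}) \xrightarrow{\;\sim\;} \mathrm{Pic}(T^*\mathcal{M}^s_{\mathrm{par}})$.

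Next I would use Proposition \ref{prop} (the second one, from Faltings): the open immersion $T^*\mathcal{M}^s_{\mathrm{par}} \hookrightarrow \mathcal{M}^s_{\mathrm{parH}}$ has complement of codimension at least $2$. Since $\mathcal{M}^s_{\mathrm{parH}}$ is smooth (it is the nonsingular locus of the normal variety $\mathcal{M}_{\mathrm{parH}}$, as recalled in the preliminaries), restriction of line bundles along this open immersion is an isomorphism $\mathrm{Pic}(\mathcal{M}^s_{\mathrm{parH}}) \xrightarrow{\;\sim\;} \mathrm{Pic}(T^*\mathcal{M}^s_{\mathrm{par}})$; this is the same codimension-$2$ argument used in Lemma \ref{keylemma}, citing \cite[Proposition $1.6$]{H80}, which applies because on a smooth (hence locally factorial) variety the restriction map on Picard groups is an isomorphism once the removed locus has codimension $\geq 2$. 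Composing, we get
\[
\mathrm{Pic}(\mathcal{M}^s_{\mathrm{par}}) \xrightarrow[q^*]{\;\sim\;} \mathrm{Pic}(T^*\mathcal{M}^s_{\mathrm{par}}) \xleftarrow{\;\sim\;} \mathrm{Pic}(\mathcal{M}^s_{\mathrm{parH}}),
\]
which is the desired isomorphism.

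I expect the routine points — smoothness of $\mathcal{M}^s_{\mathrm{parH}}$, normality needed to invoke \cite{H80}, and the triviality of $\mathrm{Pic}$ of affine space — to require no real work, since they are all either standard or already quoted in the excerpt. The one genuine input is Proposition \ref{prop}, i.e. the codimension estimate coming from Faltings' analysis of the Hitchin-type completion; everything else is formal. So the main obstacle, such as it is, is entirely absorbed into that cited proposition, and the proof I would write is essentially the three-line composition above together with a sentence justifying each arrow.
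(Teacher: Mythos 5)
Your proposal is correct and follows exactly the paper's own argument: the vector-bundle lemma gives $\mathrm{Pic}(\mathcal{M}^s_{\mathrm{par}}) \cong \mathrm{Pic}(T^*\mathcal{M}^s_{\mathrm{par}})$, and the Faltings codimension estimate plus the codimension-$2$ restriction principle gives $\mathrm{Pic}(T^*\mathcal{M}^s_{\mathrm{par}}) \cong \mathrm{Pic}(\mathcal{M}^s_{\mathrm{parH}})$. The only difference is that you spell out the justification for the second isomorphism (smoothness of $\mathcal{M}^s_{\mathrm{parH}}$ and the appeal to Hartshorne), which the paper leaves implicit.
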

 \begin{proof}
     Since $T^*\mathcal{M}^s_{\mathrm{par}}$ is a vector bundle of finite rank over $\mathcal{M}^s_{\mathrm{par}}$, by Lemma \ref{lemma} we have
     \[
     \mathrm{Pic}(\mathcal{M}^s_{\mathrm{par}}) \cong \mathrm{Pic}(T^*\mathcal{M}^s_{\mathrm{par}}).
     \]
     Then by Proposition \ref{prop}, we have the isomorphism
     \[
     \mathrm{Pic}(T^*\mathcal{M}^s_{\mathrm{par}}) \cong \mathrm{Pic}(\mathcal{M}^s_{\mathrm{parH}}).
     \]
     Hence the theorem follows.
 \end{proof}
 \begin{corollary}\label{cor}
 There is an isomorphism
 \[
 \mathrm{Pic}(\mathcal{M}_{\mathrm{par}}) \cong \mathrm{Pic}(\mathcal{M}_{\mathrm{parH}}).
 \]
 \end{corollary}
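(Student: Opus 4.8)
The plan is to deduce the corollary from Theorem~\ref{thm2} by showing that, on each side, passing from the stable locus to the full moduli space does not change the Picard group; the common mechanism is that the stable locus is the smooth locus of a normal variety and therefore has complement of codimension at least two.

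On the parabolic side this has essentially already been carried out inside the proof of Lemma~\ref{keylemma}: the space $\mathcal{M}_{\mathrm{par}}$ is a normal projective variety whose nonsingular locus is exactly $\mathcal{M}^s_{\mathrm{par}}$, so $\mathcal{M}_{\mathrm{par}}\setminus\mathcal{M}^s_{\mathrm{par}}$ has codimension at least two (a normal variety is regular in codimension one), and \cite[Proposition~1.6]{H80} gives $\mathrm{Pic}(\mathcal{M}_{\mathrm{par}})\cong\mathrm{Pic}(\mathcal{M}^s_{\mathrm{par}})$ via restriction. I would simply invoke this.

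Next I would run the identical argument on the Higgs side. By the preliminaries, $\mathcal{M}_{\mathrm{parH}}$ is a normal quasi-projective variety and $\mathcal{M}^s_{\mathrm{parH}}$ is its nonsingular locus; hence $\mathcal{M}_{\mathrm{parH}}\setminus\mathcal{M}^s_{\mathrm{parH}}$ is the singular locus of a normal variety, so it has codimension at least two, and exactly as in the proof of Lemma~\ref{keylemma} (via \cite[Proposition~1.6]{H80}) we obtain $\mathrm{Pic}(\mathcal{M}^s_{\mathrm{parH}})\cong\mathrm{Pic}(\mathcal{M}_{\mathrm{parH}})$. Combining these two isomorphisms with Theorem~\ref{thm2} produces the chain
\[
\mathrm{Pic}(\mathcal{M}_{\mathrm{par}})\;\cong\;\mathrm{Pic}(\mathcal{M}^s_{\mathrm{par}})\;\cong\;\mathrm{Pic}(\mathcal{M}^s_{\mathrm{parH}})\;\cong\;\mathrm{Pic}(\mathcal{M}_{\mathrm{parH}}),
\]
which is exactly the assertion; together with Remark~\ref{remark} this also pins down $\mathrm{Pic}(\mathcal{M}_{\mathrm{parH}})$ explicitly.

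The only step that genuinely needs care — and hence the main obstacle — is the codimension bound on $\mathcal{M}_{\mathrm{parH}}\setminus\mathcal{M}^s_{\mathrm{parH}}$: it rests on having both the normality of $\mathcal{M}_{\mathrm{parH}}$ and the identification of its smooth locus with the stable locus, both of which are recorded in Section~2 but should be cited precisely (to Yokogawa \cite{Y93} and \cite{BY96}), and on checking that \cite[Proposition~1.6]{H80} is applicable to a quasi-projective (not necessarily proper) normal variety, which it is, since that statement only requires normality. If one wished to avoid invoking normality here, the alternative would be to bound directly the dimension of the strictly semistable locus of $\mathcal{M}_{\mathrm{parH}}$, much as in the non-parabolic Higgs case; given the normality statement, however, the proof reduces to the one-line composition above.
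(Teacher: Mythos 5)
Your proposal is correct and follows essentially the same route as the paper: both sides reduce to the stable locus via normality (singular locus of codimension at least two) and \cite[Proposition~1.6]{H80}, and the result then follows from Theorem~\ref{thm2}. Your added remarks on the quasi-projective case and on citing the normality of $\mathcal{M}_{\mathrm{parH}}$ precisely are reasonable refinements but do not change the argument.
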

 \begin{proof}
     We know that 
     \[
     \mathrm{Pic}(\mathcal{M}_{\mathrm{par}}) \cong \mathrm{Pic}(\mathcal{M}^s_{\mathrm{par}}).
     \]
     Similarly, since $\mathcal{M}_{\mathrm{parH}}$ is a normal variety and $\mathcal{M}^s_{\mathrm{parH}}$ is the nonsingular locus, we have 
     \[
          \mathrm{Pic}(\mathcal{M}^s_{\mathrm{parH}}) \cong \mathrm{Pic}(\mathcal{M}_{\mathrm{parH}}).
     \]
     Therefore the corollary follows from Theorem \ref{thm2}.
 \end{proof}
 \begin{remark}
 Thus the Remark \ref{remark} and the Corollary \ref{cor} together calculates the Picard group $\mathrm{Pic}(\mathcal{M}_{\mathrm{parH}})$.
 \end{remark}
 \begin{remark}
 Similarly, the Picard group of the moduli space of parabolic $SL(n,\mathbb{C})$-Higgs bundles (i.e. parabolic Higgs bundles with trivial determinant) can be calculated using the same method as above. 
 \end{remark}

 \section*{Acknowledgement}
 This work was supported by the Institute for Basic Science (IBS-R003-D1).


\begin{thebibliography}{20}

    \bibitem{S77}
		C.S. Seshadri, 
		\emph{Moduli of vector bundles on curves with parabolic structures}. Bull. Amer. Math. Soc. 83 124-126 (1977).
	
	\bibitem{MS80}
	V.B. Mehta and C.S. Seshadri,
	\emph{Moduli of vector bundles on curves with parabolic structures}, Math. Ann., 248(3) (1980):205–239.
	
	\bibitem{ND89}
	M.S. Narasimhan and J.-M. Drezet,
	\emph{Groupe de Picard des variétés de modules de fibrés semi-stables sur les courbes algébriques}, Inventiones mathematicae 97.1 (1989): 53-94
	
	\bibitem{BY96}
	H. Boden and K. Yokogawa, 
	\emph{Moduli spaces of parabolic Higgs bundles and parabolic K(D) pairs over smooth curves}. I, Internat. J. Math. 7 (1996), no. 5, 573–598.
	
	\bibitem{BY99}
	H. Boden and K. Yokogawa, 
	\emph{Rationality of the moduli space of parabolic bundles}, Journal of
	the London Mathematical Society, Volume 59 (1999), 461 – 478
	
	\bibitem{H80}
	R. Hartshorne, 
	\emph{Stable reflexive sheaves}, Math. Ann.254 (1980), no. 2, 121–176
	
	\bibitem{F93}
	G. Faltings, 
	\emph{Stable G-bundles and projective connections}. J. Algebaric Geom. 2 (1993), 507–568.

	
	\bibitem{Y93}
	K. Yokogawa, 
	\emph{Compactification of moduli of parabolic sheaves and moduli of parabolic Higgs sheaves}, J. Math. Kyoto Univ. 33 (1993), no. 2, 451–504.
	
	\bibitem{Y95}
	K. Yokogawa,
	\emph{Inﬁnitesimal deformation of parabolic Higgs sheaves}, Internat. J. Math. 6 (1995), 125–148.

    \bibitem{LS97}
    Y. Laszlo and C. Sorger,
    \emph{The line bundles on the moduli of G-bundles on a curve}, 
    Annales scientifiques de l'École Normale Supérieure, Série 4, Tome 30 (1997) no. 4, pp. 499-525.
	
\end{thebibliography}
\end{document}